\renewcommand\section{\@startsection {section}{1}{\z@}%
                                   {-2.5ex \@plus -1ex \@minus -.2ex}%
                                   {1.1ex \@plus.2ex}%
                                   {\normalfont\large\bfseries}}
\newcommand{\ex}{\mathcal{E}}
\newcommand{\excl}{\closure{\ex}}
\newcommand{\exb}{\ex_b}
\newcommand{\dex}{d_\ex}
\newcommand{\dm}{d_\lambda}
\newcommand{\dhepi}{d_\Gamma}
\newcommand{\code}{\mathfrak{C}}
\newcommand{\epi}{\mathrm{epi}}
\newcommand{\gr}{\mathrm{gr}}
\newcommand{\Cbdist}{\Cb\(\R^{n\times n}\)}
\renewcommand{\tow}{\tonamed{w}}
\newcommand{\F}{\mathcal{F}}
\newcommand{\B}{\mathfrak{B}}
\newcommand{\dboxl}{\Box_\lambda}
\newcommand{\dboxh}{\Box_\half}
\newcommand{\dgp}{d_\mathrm{GP}}
\newcommand{\dg}[1][1]{\underline\Box_{#1}}
\newcommand{\dgl}{\dg[\lambda]}
\newcommand{\dgh}{\dg[\half]}
\newcommand{\dpr}{d_\mathrm{Pr}}
\newcommand{\M}{\protect\mathds{M}}
\newcommand{\X}{\protect\mathcal{X}}
\newcommand{\T}{\mathcal{T}}
\newcommand{\Xmm}{\protect\mathfrak{X}}
\newcommand{\triin}{$\triangle$\nbd inequality}
\newcommand{\triins}{$\triangle$\nbd inequalities}
\author{Wolfgang L\"ohr\footnote{University of Duisburg-Essen, Mathematics department, Universit\"atsstr.~2,
45141 Essen, Germany\newline\hspace*{1.8em}Email: wolfgang.loehr@uni-due.de}}
\title{Equivalence of Gromov-Prohorov- and Gromov's $\dg$-Metric on the Space of Metric Measure
Spaces\footnote{Published in \emph{Electron.\ Commun.\ Probab.}, 18 no.\ 17, 2013. MR3037215.}}
\renewcommand{\M}{\Xmm}
\begin{document}

\maketitle

\begin{abstract}
	The space of metric measure spaces (complete separable metric spaces with a probability measure)
	is becoming more and more important as state space for stochastic processes. Of particular
	interest is the subspace of (continuum) metric measure trees.
	Greven, Pfaffelhuber and Winter introduced the Gromov-Prohorov metric $\dgp$ on the space of metric
	measure spaces and showed that it induces the Gromov-weak topology. They also conjectured that this
	topology coincides with the topology induced by Gromov's $\dg$ metric.
	Here, we show that this is indeed true, and the metrics are even bi-Lipschitz equivalent.
	More precisely, $\dgp=\half\dgh$, and hence $\dgp \les \dg \les 2\dgp$.
	The fact that different approaches lead to equivalent metrics underlines their importance and also that
	of the induced Gromov-weak topology.
	
	As an application, we give a shorter proof of the known fact that the map associating to a lower
	semi-continuous excursion the coded $\R$\nbd tree is Lipschitz continuous when the excursions are
	endowed with the (non-separable) uniform metric. We also introduce a new, weaker, metric topology on
	excursions, which has the advantage of being separable and making the space of bounded excursions a
	Lusin space. We obtain continuity also for this new topology.
	\keywords{space of metric measure spaces, Gromov-Prohorov metric, Gromov's box-metric, Gromov-weak
		topology, real tree, coding trees by excursions, Lusin topology on excursions}
\end{abstract}

\section{Introduction}

Tree-valued stochastic processes frequently appear in probability theory and its application areas, such as
theoretical biology. For instance, in an evolutionary model, the development of the genealogical tree is of
interest. In the continuum limit of infinite population size, the finite tree becomes a continuum tree ($\R$\nbd
tree) and the normalised counting measure of individuals becomes a probability measure on it. This measure is
needed to describe the population density on the tree and to sample individuals from it. See Aldous' seminal
paper \cite{Aldous:CRT3} for the convergence of finite variance Galton-Watson trees to a (Brownian) continuum
measure tree, and results of Duquesne and Le~Gall (\cite{DuquesneLeGall:GWtoLevy, Duquesne:limitForContour}) for
the convergence of infinite variance Galton-Watson trees to L\'evy trees.

More generally than $\R$\nbd trees, we can considers random metric (probability) measure spaces, an approach
introduced by Greven, Pfaffelhuber and Winter in \cite{Anita:convRandmmspace} and applied by the authors and
Depperschmidt to obtain tree-valued Fleming-Viot dynamics in \cite{Anita:resampling, DGP:flemingViot}.
Here, $\X\=(X, d, \mu)$ is a \define{metric measure space} (\define{mm-space}) if $(X, d)$ is a
complete, separable metric space and $\mu$ a probability measure on the Borel \sigalg\ of $X$. 
To work with mm-space valued processes, it is crucial to have an appropriate topology on the set of mm-spaces,
or rather the set $\M$ of isometry classes of mm-spaces. A fruitful topology is given by the Gromov-weak
topology introduced in \cite{Anita:convRandmmspace}. In the same paper, the authors conjectured that it coincides
with the topology induced by Gromov's metric $\dg$, which is defined in \cite[Chapter~$3\half$]{Gromov}. They
also introduced a complete metric, the Gromov-Prohorov metric $\dgp$, that metrises the Gromov-weak topology.

Here, we show that $\dg$ and $\dgp$ are bi-Lipschitz equivalent, which in particular implies that the conjecture
is true and $\dg$ indeed metrises Gromov-weak topology. Furthermore, we use this result to prove that the
measure $\R$\nbd tree coded by an excursion depends continuously on the excursion. To this end, we consider two
topologies on the space of lower semi-continuous excursions. For the uniform topology, Lipschitz continuity is
already shown by Abraham, Delmas and Hoscheit in \cite[Prop.~2.9]{AbrahamDelmasHoscheit:exittimes} (with their
metric on trees, which implies the result for ours), but we obtain a much shorter proof using the equivalence of
$\dgp$ and $\dg$.
The uniform topology has the disadvantage of being non-separable, therefore we introduce a new, weaker,
separable, metrisable topology, which is Lusin on the subset of bounded excursions. We also show continuous
dependence of the tree on the excursion in this weaker topology.

In the next section, we recall the definition of the metrics $\dgp$ and $\dg$, as well as of Gromov-weak topology,
and emphasize that the algebra of polynomials used to define Gromov-weak topology is convergence determining
albeit not dense in the bounded continuous functions. We also give a short comparison to related, but
slightly different topologies used on spaces of mm-spaces.
The third section contains the proof of the equivalence of $\dgp$ and $\dg$.
In the last section, we apply the equivalence to measure trees coded by excursions and define the new topology
on the space of excursions.

\section{Metrics and topologies on the space of mm-spaces}

We do not distinguish between isomorphic mm-spaces. Here, two mm-spaces $\X=(X, d, \mu)$ and
$\X'=(X', d', \mu')$ are called \define{isomorphic} if there is a measure preserving map $f\colon X\to X'$ such
that the restriction to the support of $\mu$ is an isometry, i.e.
	\[ \mu' \= \mu\circ f^{-1} \und d(x, y) \= d'\(f(x), f(y)\) \;\; \forall x, y\in\supp(\mu). \]
We denote the space of (isometry classes of) mm-spaces by $\M$. 

\begin{remark}
	Because $(X,d)$ is complete, an isomorphism $f$ from $\X$ to $\X'$ is an isometric bijection between
	$\supp(\mu)$ and $\supp(\mu')$. In particular, there is also an inverse isomorphism $g$ from $\X'$ to
	$\X$ with $g\circ f \= \id$ on $\supp(\mu)$.
\end{remark}

\subsubsection*{Gromov-Prohorov metric}

The Gromov-Prohorov metric is obtained by embedding the metric spaces underlying the mm-spaces optimally into a
common metric space and taking the Prohorov distance between the pushforward measures. 

\begin{definition}[Prohorov metric]
	Let $\mu, \nu$ be probability measures on a metric space $(X, d)$. Then the \define{Prohorov distance}
	is
	\[ \dpr(\mu, \nu) \defeq \inf\bset{\eps>0}{\mu(A) \le \nu(A^\eps) + \eps \;\;\forall A\in \B(X)}, \]
	where $A^\eps\defeq\bset{x\in X}{d(A, x) < \eps}$.
\end{definition}

\begin{remark}
	Below, we use the following equivalent expression for the Prohorov metric. A \define{coupling} between
	$\mu$ and $\nu$ is a measure $\xi$ on $X^2=X\times X$ with marginals $\mu$ and $\nu$ on $X$. Then
		\[ \dpr(\mu, \nu) \= \inf\Bset{\eps>0}{\exists \text{ coupling $\xi$ of $\mu$, $\nu$}:
			\xi\(\set{(x,y)\in X^2}{d(x,y)\ge \eps}\) \les \eps}. \]
\end{remark}

\begin{definition}[Gromov-Prohorov metric]
	Let $\X_i=(X_i, d_i, \mu_i) \in \M$, $i=1,2$, be mm-spaces. The \define{Gromov-Prohorov metric} is
	defined by
		\[ \dgp(\X_1, \X_2) \defeq \inf_{f, g}\, \dpr\(\mu_1\circ f^{-1}, \mu_2\circ g^{-1}\), \]
	where the infimum is taken over all isometries $f\colon X_1\to X$ and $g\colon X_2\to X$ into a common
	separable metric space $(X, d)$. 
\end{definition}

\subsubsection*{Gromov-weak topology}

The idea of Gromov-weak topology is to use convergence in distribution of finite metric subspaces, which are
sampled from $X$ with the measure $\mu$. A very nice property of the Gromov-Prohorov metric is that it induces
precisely the Gromov-weak topology, as shown in \cite{Anita:convRandmmspace}. This alternative characterisation
of convergence provides us with a sub-algebra of $\Cb(\M)$, called algebra of polynomials. The usefulness of
this algebra stems from the fact that it is rich enough to determine convergence of measures on $\M$.
To emphasize that polynomials are an essential tool for working with convergence in distribution of $\M$\nbd
valued random variables, we remark that one cannot use the space $\Cc(\M)$ of continuous functions with compact
support, because no point in $\M$ has a compact neighbourhood, and hence $\Cc(\M)=\{0\}$ is trivial.

\begin{definition}\deflabel{Gw}
	A \textbf{polynomial} (on $\M$) is a function $\Phi\colon \M\to \R$ of the form
		\[ \Phi(\X) \= \Phi^\phi(\X) \defeq
			\probinta{X^n}{\phi\(\(d(x_i,x_j)\)_{i,j\le n}\)}{\mu^{\otimes n}}{x}, \]
	where $n\iN$ and $\phi\in\Cbdist$. Let $\Pi$ be the set of such functions. \textbf{Gromov-weak
	topology} is the topology induced by $\Pi$ on $\M$.
\end{definition}

\begin{remark}[Polynomials are not dense]
	$\Pi$ is obviously an algebra, but it is not dense in $\Cb(\M)$. To see this, assume it is dense and
	consider the subspace $\M_r$ of mm-spaces with essential diameter bounded by a fixed $r>0$. Because
	$\M_r$ is closed, the set $\Pi_r\defeq\set{\Phi\restricted{\M_r}}{\Phi\in \Pi}$ of restrictions of
	polynomials to $\M_r$ is dense in $\Cb(\M_r)$. Because $\Pi_r$ is clearly separable, this means that
	$\Cb(\M_r)$ is separable, and hence $\M_r$ is compact.
	This is a contradiction (e.g.\ the set of finite spaces with discrete metric and uniform distribution
	has no limit point).
\end{remark}

We say that a set $\F\subseteq \Cb(\M)$ is \define{convergence determining} if for probability measures $\xi_n,
\xi$ on $\M$, the weak convergence $\xi_n \tow \xi$ is equivalent to
	\[ \plainint{f}{\xi_n}\ton\plainint{f}{\xi} \for f\in\F. \]
Since $\Cb(\M)$ is difficult to describe, it is important to have such a set with a more tractable description.
That $\Pi$ is indeed convergence determining is shown with some effort by Depperschmidt, Greven and Pfaffelhuber
in \cite{DGP:mmmspace}. We can also deduce it from an apparently not so well-known general theorem due to Le~Cam.

\begin{theorem}[Le~Cam, \cite{LeCam}; see also {\cite[Lem.~4.1]{HoffmannJorgensen:StFlour}}]
	Let\/ $X$ be a completely regular Hausdorff space, and\/ $\F\subseteq \Cb(X)$ multiplicatively closed.
	Then\/ $\F$ is convergence determining for Radon probability measures if and only if\/ $\F$ generates the
	topology of\/ $X$.
\end{theorem}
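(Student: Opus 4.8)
In each of the two equivalences one implication is trivial, since weak convergence tests in particular against the functions of $\F\subseteq\Cb(X)$. The substance is: (i) if $\F$ is multiplicatively closed and generates the topology, then testing against $\F$ already forces weak convergence; and (ii) if $\F$ does not generate the topology it is not convergence determining. For (i) the plan is to realise $X$ inside a compactification built from $\F$ and import Stone--Weierstrass. Concretely, I would set $e(x)\defeq(f(x))_{f\in\F}$, viewed as a map into the compact Hausdorff space $\prod_{f\in\F}[-\|f\|_\infty,\|f\|_\infty]$, and let $K\defeq\overline{e(X)}$. Since $\F$ generates the topology and $X$ is Hausdorff, $\F$ separates points and the topology of $X$ coincides with the initial topology of $e$; hence $e$ is a homeomorphism onto the dense subset $e(X)\subseteq K$, which I identify with $X$. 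Each coordinate projection $\pi_f$ restricts to $f$ on $X$, and because $\F$ is multiplicatively closed the continuous functions $\pi_f\pi_g$ and $\pi_{fg}$ agree on the dense set $X$, hence on $K$; so the linear span $\mathcal A$ of $\{1\}\cup\{\pi_f:f\in\F\}$ is a subalgebra of $\Cb(K)$ containing the constants and separating points of $K$, and Stone--Weierstrass makes $\mathcal A$ dense in $\Cb(K)$.

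The key steps of (i) are then as follows. Given Radon probability measures $\xi_n,\xi$ on $X$ with $\int f\,d\xi_n\to\int f\,d\xi$ for all $f\in\F$, push them to $K$ by $\mu_n\defeq\xi_n\circ e^{-1}$, $\mu\defeq\xi\circ e^{-1}$; these are again Radon (continuous images of compacta are compact, so inner regularity carries over) and concentrated on $X=e(X)$. Since $\int\pi_f\,d\mu_n=\int f\,d\xi_n\to\int f\,d\xi=\int\pi_f\,d\mu$, linearity gives convergence of $\int g\,d\mu_n$ for every $g\in\mathcal A$, and density of $\mathcal A$ in $\Cb(K)$ together with the uniform bound $\mu_n(K)=1$ upgrades this to all $g\in\Cb(K)$, i.e.\ $\mu_n\to\mu$ weakly on the compact space $K$. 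Next, the portmanteau inequality on $K$ gives $\liminf_n\mu_n(U)\ge\mu(U)$ for every open $U\subseteq K$ (its proof uses inner regularity of $\mu$ to choose a compact $L\subseteq U$ with $\mu(L)$ close to $\mu(U)$, and a Urysohn function between $\mathbf 1_L$ and $\mathbf 1_U$). Finally I would transport this back: every open $V\subseteq X$ has the form $U\cap X$ with $U\subseteq K$ open, and since $\mu_n,\mu$ are concentrated on $X$ one has $\xi_n(V)=\mu_n(U)$ and $\xi(V)=\mu(U)$, so $\liminf_n\xi_n(V)\ge\xi(V)$ for all open $V\subseteq X$; the elementary half of the portmanteau theorem on $X$ (write $\int h\,d\xi_n=\int_0^1\xi_n(h>t)\,dt$ for $0\le h\le1$ and apply Fatou) then yields $\xi_n\to\xi$ weakly on $X$.

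I expect the transport step to be the main obstacle. The point is that the sup-norm closure of the algebra generated by $\F$ is in general a \emph{proper} subspace of $\Cb(X)$ -- the very phenomenon behind the Remark that polynomials are not dense -- so one cannot approximate an arbitrary $g\in\Cb(X)$ uniformly by functions assembled from $\F$ and conclude directly. The Radon hypothesis is exactly what repairs this: via inner regularity and the compactness of $K$ it lets the portmanteau argument run, and that argument never requires uniform approximation on all of $X$.

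For (ii) I would argue by contraposition. If $\F$ generates a topology $\tau_{\F}$ strictly coarser than the topology $\tau$ of $X$, choose $U\in\tau\setminus\tau_{\F}$ and a point $p\in U$ admitting no $\tau_{\F}$-neighbourhood contained in $U$. Indexing by pairs $(F,\eps)$ with $F\subseteq\F$ finite and $\eps>0$ (directed by $F\subseteq F'$, $\eps\ge\eps'$), pick $x_{(F,\eps)}\in X\setminus U$ with $|f(x_{(F,\eps)})-f(p)|<\eps$ for all $f\in F$, which is possible because the corresponding basic $\tau_{\F}$-neighbourhood of $p$ is not inside $U$. Then $f(x_\lambda)\to f(p)$ for every $f\in\F$, so $\delta_{x_\lambda}$ converges to $\delta_p$ against $\F$; but, taking by complete regularity a function $g\in\Cb(X)$ with $g(p)=1$ and $g\equiv0$ on the closed set $X\setminus U$, one gets $\int g\,d\delta_{x_\lambda}=0\not\to1=\int g\,d\delta_p$, so $\delta_{x_\lambda}\not\to\delta_p$ weakly and $\F$ fails to be convergence determining. (In the non-metrisable generality ``convergence'' should be read via nets; when $X$ is metrisable, e.g.\ $X=\M$, a sequence suffices.)
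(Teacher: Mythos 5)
The paper does not prove this theorem at all—it is quoted from Le~Cam and Hoffmann-J\o rgensen—so there is no in-text argument to compare against; your proposal supplies a proof, and it is correct. It is essentially the canonical proof of Le~Cam's result: embed $X$ via the evaluation map into the compact space $K$, use multiplicative closedness of $\F$ to make the lifted family (plus constants) an algebra, apply Stone--Weierstrass to upgrade convergence against $\F$ to weak convergence of the pushforwards on $K$, and transport the open-set portmanteau inequality back to $X$. You handle the genuinely delicate points: the Radon hypothesis is exactly what makes the pushforward measures inner regular on $K$ (a Borel probability measure on a compact Hausdorff space need not be Radon, so this cannot be skipped), and the transport step only requires $\liminf_n \xi_n(V)\ge\xi(V)$ for open $V$ rather than uniform approximation of $\Cb(X)$ by the algebra generated by $\F$---which, as the paper's remark on the non-density of polynomials shows, genuinely fails in the intended application. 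The one caveat, which you flag yourself, concerns the ``only if'' direction: your separating construction produces a \emph{net} of Dirac measures, while the paper's definition of ``convergence determining'' is phrased for sequences; in the non-metrisable generality these readings can differ, so strictly speaking you prove the net formulation (which is the form in the cited sources). Since the paper only invokes the ``if'' direction---$\Pi$ generates the Gromov-weak topology by definition and is multiplicatively closed, hence convergence determining---your argument fully covers what the paper actually uses.
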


\begin{corollary}
	The set\/ $\Pi$ of polynomials is convergence determining.
\end{corollary}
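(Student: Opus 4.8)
The plan is to verify the two hypotheses of Le~Cam's theorem for $X = \M$ and $\F = \Pi$, and then read off the conclusion. First I would check that $\M$, equipped with the Gromov-weak topology, is a completely regular Hausdorff space. This is immediate from the fact, recalled earlier in the section, that $\dgp$ is a metric inducing the Gromov-weak topology: every metrisable space is completely regular and Hausdorff. (If one wants to avoid invoking metrisability at this point, one can instead note that $\Pi$ separates points of $\M$ and, being an algebra of bounded continuous functions, generates a completely regular topology directly; but quoting the $\dgp$-result is cleaner.) Also, on a separable metrisable — in particular Polish — space, every Borel probability measure is Radon, so the restriction to Radon measures in Le~Cam's theorem is no restriction at all here.

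Next I would observe that $\Pi$ is multiplicatively closed: the product of two polynomials $\Phi^{\phi}$ (of degree $n$) and $\Phi^{\psi}$ (of degree $m$) is again a polynomial, namely $\Phi^{\chi}$ of degree $n+m$, where $\chi\bigl((r_{ij})_{i,j\le n+m}\bigr) \defeq \phi\bigl((r_{ij})_{i,j\le n}\bigr)\cdot\psi\bigl((r_{i+n,j+n})_{i,j\le m}\bigr)$, using that $\mu^{\otimes(n+m)}$ disintegrates as $\mu^{\otimes n}\otimes\mu^{\otimes m}$ and that $\chi$ is again bounded continuous on the relevant space of distance matrices. This is the routine computation underlying the earlier remark that $\Pi$ is an algebra, so I would not belabour it.

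The real content is the remaining hypothesis: $\Pi$ generates the Gromov-weak topology on $\M$. But this is true essentially by definition — the Gromov-weak topology was \emph{defined} in the excerpt as the topology induced by $\Pi$, i.e. the coarsest topology making every $\Phi\in\Pi$ continuous, which is exactly what "generates the topology" means in Le~Cam's theorem. So there is nothing to prove here beyond unwinding definitions. Having verified completely-regular-Hausdorffness, multiplicative closedness, and topology-generation, Le~Cam's theorem applies verbatim and yields that $\Pi$ is convergence determining for Radon — hence all Borel — probability measures on $\M$.

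I do not expect a genuine obstacle. The only thing to be slightly careful about is consistency of the topological setup: Le~Cam's theorem is stated for a completely regular Hausdorff space with a \emph{given} topology, and one must make sure the topology generated by $\Pi$ (used in the hypothesis) is the same as the topology on which continuity and Radon-ness are assessed (used in the conclusion) — but since we take that topology to \emph{be} the Gromov-weak topology by definition, this matches automatically. If anything needs a sentence of justification it is the passage from Radon to arbitrary Borel probability measures, which follows because $(\M,\dgp)$ is Polish.
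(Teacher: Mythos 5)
Your proposal is correct and follows the same route as the paper: verify that $\M$ is completely regular with all Borel probability measures Radon (via Polishness under $\dgp$), that $\Pi$ is multiplicatively closed, and that $\Pi$ generates the Gromov-weak topology by definition, then apply Le~Cam's theorem. You merely spell out the multiplicative-closedness computation and the Radon-to-Borel step more explicitly than the paper does.
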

\begin{proof}
	$\M$ is a Polish space, hence completely regular and all probability measures on it are Radon.
	$\Pi$ is an algebra, thus multiplicatively closed and we can apply the Le~Cam theorem.
\end{proof}

\subsubsection*{Gromov's metric $\dgl$}

To obtain the Gromov-Prohorov metric, we embed the metric spaces and measure the distance of the resulting
pushforward measures with the Prohorov metric. For Gromov's $\dgl$ metric, it works the opposite way. Namely,
the measure spaces are parametrised by a measure preserving map from $[0,1]$ (with Lebesgue measure), and then
the distance of the resulting pullbacks of the metrics is evaluated with the following metric.

\begin{definition}[$\dboxl$ metric]
	Let $(X, \B, \mu)$ be a probability space. For functions $r, s\colon X\times X \to \R$, we define
	\[ \dboxl(r, s) \defeq \inf\Bset{\eps>0}{\exists X_\eps\in\B:
		\norm{r\restricted{X_\eps\times X_\eps} -  s\restricted{X_\eps\times X_\eps})}\le\eps,
		\;\; \mu(X\setminus X_\eps)\le \lambda\eps}. \]
\end{definition}

\noindent Obviously, we have
	\[ \dboxl \les \Box_{\lambda'} \les \bruch\lambda{\lambda'} \dboxl  \for \lambda > \lambda'. \]

\begin{definition}[Gromov's $\dgl$ metric]
	Let $\X, \X'$ be mm-spaces, and $I\defeq [0, 1]$, equipped with Lebesgue measure. Let
	$\F(\X)\defeq\bset{\vphi\colon I \to X}{\vphi \text{ is measure preserving}}$ be the set of
	\define{parametrisations} of $(X, \mu)$, and for $\vphi\in\F(\X)$ let $d_\vphi(s, t) \defeq
	d\(\vphi(s), \vphi(t)\)$ be the pullback of $d$ with $\vphi$. Then we define
	\[ \dgl(\X, \X') \defeq
		\inf_{\substack{\vphi\in\F(\X)\\ \vphi'\in\F(\X')}} \dboxl(d_\vphi, d'_{\vphi'}). \]
\end{definition}

\begin{remark}
	Because $(X, d)$ is a Polish space, the set $\F(\X)$ of (measure preserving) parametrisations is
	non-empty. This follows for example from the version of the Skorohod representation on $I$ given in
	\cite[Thm.~8.5.4]{BogachevII}.
\end{remark}

\subsubsection*{Related topologies}

\begin{enumerate}
	\item In \cite{measuredGH}, Fukaya introduced the \define{measured Hausdorff topology} (often cited as
		measured Gromov-Hausdorff topology) for compact mm-spaces. 
		The same topology is called \define{weighted Gromov-Hausdorff topology},
		and a complete metric inducing it is constructed by Evans and Winter in
		\cite{Anita:subtreeprune}. The idea is that spaces are close if there is an $\eps$\nbd
		isometry mapping one measure Prohorov-close to the other.
		Convergence in measured Hausdorff topology implies Gromov-weak convergence, but not vice
		versa, because the former implies Gromov-Hausdorff convergence of the underlying
		metric spaces, which is not the case for Gromov-weak topology. Note that the underlying
		equivalence classes are also different: For two mm-spaces to be equivalent in the measured
		Hausdorff topology, the whole spaces have to be isometric, while in a Gromov-weak sense, this
		is required only for the supports of the measures.
	\item Recently, Abraham, Delmas and Hoscheit (\cite{AbrahamDelmasHoscheit:dGHP}) extended the measured
		Hausdorff topology to complete, locally compact, rooted length spaces with locally finite
		measures.  Note that these measures are finite on all balls, because closed balls are compact in
		such spaces.
		The authors introduced the \define{Gromov-Hausdorff-Prohorov metric}, first on compact spaces
		using an embedding and measuring the sum of Hausdorff and Prohorov distance. That this metrises
		measured Hausdorff topology is easy to see from the definitions, using the same connection
		between $\eps$\nbd isometries and Hausdorff-close embeddings that is frequently applied in the
		context of Gromov-Hausdorff convergence.
		In the locally compact setting, they integrate the weighted distances of the measures restricted
		to balls.  Note that this extended topology is vague in the sense that the total mass is not
		preserved. Thus, on spaces with finite (not necessarily probability) measures, it is not
		stronger than the natural extension of Gromov-weak topology, where the measures in \defref{Gw}
		are no longer required to be probabilities.
	\item In \cite{Sturm:geometryMMspace}, Sturm defines the \define{$L_2$\nbd transportation distance}
		analogously to $\dgp$, but with the (2-)Wasserstein metric instead of the Prohorov metric. It
		induces a topology on $\M$ that is strictly stronger than Gromov-weak topology, but coincides
		with it on subspaces of $\M$ consisting of spaces with uniformly bounded (essential) diameter.
		Its restriction to the space of compact mm-spaces is strictly weaker than measured Hausdorff
		topology.
\end{enumerate}

\section{Equivalence of $\dgp$ and $\dg$}

\begin{theorem}\thmlabel{main}
	$\texteq \dgp \= \half\dgh$.
\end{theorem}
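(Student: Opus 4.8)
The plan is to establish the two inequalities $\dgp \le \half\dgh$ and $\half\dgh \le \dgp$ separately, since each direction corresponds to converting between the two rather different representations (optimal embedding into a common space vs.\ common parametrisation by $[0,1]$).

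For the inequality $\dgp \le \half\dgh$, I would start from a pair of parametrisations $\vphi\in\F(\X_1)$, $\vphi'\in\F(\X_2)$ that nearly achieves $\dgh$, so that there is a set $I_\eps\subseteq I$ with $\leb(I\setminus I_\eps)\le\eps$ and $\abs{d_\vphi(s,t)-d'_{\vphi'}(s,t)}\le\eps$ for all $s,t\in I_\eps$. The idea is to build a common metric space by glueing $X_1$ and $X_2$ along the ``matched'' parts: define on the disjoint union $X_1\sqcup X_2$ a pseudometric $d$ that restricts to $d_1$ and $d_2$ on the two pieces and sets $d(\vphi(s),\vphi'(s))$ small (of order $\eps$) for $s\in I_\eps$, extending by the infimum over paths to get the triangle inequality; one has to check that this infimum does not collapse distances within each factor, which is where the uniform closeness on $I_\eps$ is used. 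Then the coupling $\xi\defeq (\vphi,\vphi')_*\leb$ on $X_1\times X_2$ pushes forward to a coupling of $\mu_1\circ f^{-1}$ and $\mu_2\circ g^{-1}$ in the common space, and the set where the two coordinates are more than (a constant times) $\eps$ apart has measure at most $\eps$; combined with the Prohorov-via-couplings characterisation from the remark, this yields $\dpr \le C\eps$. Tracking constants carefully should give exactly the factor $\half$ rather than a worse constant; I expect that the cleanest route is to put the two copies at distance exactly $\half\eps$ apart where matched, so distortion of within-factor distances is $\le\eps$ and the Prohorov bound comes out as $\half\eps$.

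For the reverse inequality $\half\dgh \le \dgp$, I would start from isometric embeddings $f\colon X_1\to X$, $g\colon X_2\to X$ and a coupling $\xi$ of $\mu_1,\mu_2$ in $X$ with $\xi(\{d(x,y)\ge\eps\})\le\eps$ realising (nearly) $\dgp$. The natural move is to parametrise the coupling $\xi$ itself by a single map $\psi\colon I\to X_1\times X_2$ with $\psi_*\leb=\xi$ (possible since $X_1\times X_2$ is Polish), and then set $\vphi\defeq\pi_1\circ\psi$, $\vphi'\defeq\pi_2\circ\psi$; these are parametrisations of $\X_1$ and $\X_2$ because the marginals of $\xi$ are $\mu_1,\mu_2$. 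On the set $I_\eps\defeq\psi^{-1}(\{(x,y):d(f(x),g(y))<\eps\})$, which has Lebesgue measure $\ge 1-\eps$, the triangle inequality in $X$ gives $\abs{d_\vphi(s,t)-d'_{\vphi'}(s,t)}=\abs{d_1(\vphi(s),\vphi(t))-d_2(\vphi'(s),\vphi'(t))}\le d(f\vphi(s),g\vphi'(s))+d(f\vphi(t),g\vphi'(t))<2\eps$ for $s,t\in I_\eps$. Hence $\dbox_1(d_\vphi,d'_{\vphi'})\le 2\eps$, giving $\dgh\le 2\eps$ and thus $\half\dgh\le\dpr(\mu_1\circ f^{-1},\mu_2\circ g^{-1})$; taking the infimum over embeddings and couplings gives $\half\dgh\le\dgp$.

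The main obstacle is the glueing construction in the first direction: one must verify that the pseudometric defined via shortest paths through matched pairs is genuinely close to $d_1$ and $d_2$ on the respective factors (i.e.\ that detours through the other space cannot shorten distances by more than $O(\eps)$), and that after passing to the quotient metric space one still controls the Prohorov distance with the sharp constant $\half$. A subtlety is that parametrisations land in $X$, not necessarily in $\supp(\mu)$, but since distances are only ever evaluated at images of $\vphi,\vphi'$ and the mass lives on the closure of those images, this causes no real difficulty; similarly one should note $\F(\X)\neq\emptyset$ (already remarked) so the infima are over nonempty sets. Once both inequalities are in hand, $\dgp=\half\dgh$ follows, and the bi-Lipschitz bounds $\dgp\le\dg\le 2\dgp$ claimed in the abstract follow from the elementary comparison $\dbox_1\le\Box_\lambda\le\lambda\dbox_1$ for $\lambda\ge 1$ stated just after the definition of $\dboxl$.
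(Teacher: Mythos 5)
Your proposal is correct and follows essentially the same route as the paper: one direction by parametrising a near-optimal coupling over $[0,1]$ and projecting, the other by glueing $X_1\sqcup X_2$ along matched parametrisation points (the paper's explicit glueing is $d(x,y)=\inf_{s\in S} d_1(x,\vphi_1(s))+d_2(\vphi_2(s),y)+\eps$, whose triangle inequality is exactly the ``no collapse'' check you flag). The constants do come out as hoped once you normalise to $\dgh<2\eps$ (distortion $2\eps$ on a set of co-measure $\eps$, glueing constant $\eps$, Prohorov bound $\eps$ --- note the bad-set mass, not the glueing distance, is what forces the additive $\eps$ in the Prohorov estimate).
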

\begin{proof} Let $\X_i=(X_i, d_i, \mu_i)$, $i=1,2$, be mm-spaces.
	\wcase{``$\ge$''} Assume $\dgp(\X_1, \X_2) \ls \eps$ for some $\eps>0$. Then we can embed $(X_i, d_i)$,
		$i=1,2$, into a (common) complete, separable metric space $(X, d)$, such that the pushforward
		measures $\nu_i$ satisfy $\dpr(\nu_1, \nu_2) \ls \eps$. Thus there is a coupling $\nu$ of
		$\nu_1$ and $\nu_2$ on $X^2$ with
			\[ \nu(Y_\eps) \les \eps \spacedtxt{for} Y_\eps
				\defeq \bset{(x, y) \in X^2}{d(x, y) \ges \eps}.\]
		Now choose a parametrisation $\vphi$ of $(X^2, \nu)$, i.e.\ $\vphi\colon[0,1]\to X^2$ is
		measurable and $\nu=\lambda\circ\vphi^{-1}$ for Lebesgue measure $\lambda$.
		Let $\pi_i$, $i=1,2$, be the canonical projections from $X^2$ to $X$. Then $\vphi_i\defeq
		\pi_i\circ\vphi$ is a parametrisation of $\X_i$ (or its isomorphic image in $X$).
		Let $r_i$ be the pullback of $d$ under $\vphi_i$. We show $\dboxh(r_1, r_2) \les 2\eps$.
		Indeed, $\lambda\(\vphi^{-1}(Y_\eps)\) \= \nu(Y_\eps)\les \eps \= \half 2\eps$, and for
		$s, t\in[0,1]\setminus \vphi^{-1}(Y_\eps)$ we have by definition of $Y_\eps$ that
		$d\(\vphi_1(s), \vphi_2(s)\) \les \eps$. Thus
			\[ r_1(s, t) \= d\(\vphi_1(s), \vphi_1(t)\) \les d\(\vphi_2(s), \vphi_2(t)\) + 2\eps
				\= r_2(s, t)+2\eps, \]
		and by symmetry, $\betrag{r_1(s, t) - r_2(s, t)}\les 2\eps$.
		In total, $\dgh(\X_1, \X_2) \les \dboxh(r_1, r_2) \les 2\eps$.
	\wcase{``$\le$''} Let $\dgh(\X_1, \X_2)\ls 2\eps$ and $\vphi_i \colon [0,1] \to X_i$ parametrisations
		of $\X_i$, $i=1,2$, with $\dboxh(r_1, r_2) \ls 2\eps$, where $r_i$ is the pullback of $d_i$
		with $\vphi_i$. There is a set $S\subseteq [0,1]$ with $\lambda(S)\ges 1-\eps$ and
		$|r_1-r_2|\les 2\eps$ on $S^2$. On the disjoint union $X\defeq X_1\uplus X_2$, we define a
		metric $d$ by
		\begin{equation}\eqlabel{ddef}
			d\restricted{X_i^2}\defeq d_i \und d(x, y) \defeq
			     \inf_{s\in S} d_1\(x,\vphi_1(s)\) + d_2\(\vphi_2(s), y\) + \eps
			     	\;\;\forall x\in X_1,\,y\in X_2.
		\end{equation}
		We check that $d$ satisfies the \triin\ in \lemref{metric} below. Extend the $\mu_i$ to measures
		on $X$ with support in $X_i$. To estimate their Prohorov distance in $(X, d)$, let
		$F\subseteq X$ be measurable. Note that by definition, $d\(\vphi_1(s), \vphi_2(s)\) \= \eps$ for
		every $s\in S$. Consequently, for every $\eps_0 \gs \eps$,
			\[ \vphi_2\(\vphi_1^{-1}(F) \cap S\) \subseteq F^{\eps_0} \spacedtxt{where}
				F^{\eps_0} \= \bset{x\in X}{d(x, F) \ls \eps_0}. \]
		Therefore,
			\[ \mu_1(F) \= \lambda\(\vphi_1^{-1}(F)\) \les \lambda\(\vphi_1^{-1}(F)\cap S\) + \eps
				\les \mu_2 \( \vphi_2\(\vphi_1^{-1}(F) \cap S\)\) + \eps
				\les \mu_2(F^{\eps_0}) + \eps.\]
		Since $\eps_0\gs \eps$ is arbitrary, $\dpr(\mu_1, \mu_2)\les \eps$ and thus
		$\dgp(\X_1, \X_2)\les \eps$.
\end{proof}	

\begin{corollary}\corlabel{equiv}
	For every\/ $\lambda>0$, we have
	\[ \min\{2,\tfrac1\lambda\}\cdot\dgp \les \dgl \les \max\{2,\tfrac1\lambda\}\cdot \dgp. \]
	In particular, $\dg$ induces the Gromov-weak topology.
\end{corollary}
\begin{proof}
	For $\lambda\ge \half$, the equation $\dgh\les 2\lambda\dgl \les 2\lambda\dgh$ is obvious from the
	definition of $\dg[\lambda]$. For $\lambda \le \half$, we get the same inequality with
	``$\ge$'' instead of ``$\le$''. Now the theorem implies the claim.
\end{proof}

\noindent We still have to check that \eqref{ddef} in the proof of \thmref{main} defines a metric.

\begin{lemma}\lemlabel{metric}
	The\/ $d$ defined in \eqref{ddef} satisfies the\/ \triin. Thus it is a metric. 
\end{lemma}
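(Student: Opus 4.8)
The only non-trivial axiom is the triangle inequality, since symmetry and positivity are immediate from the formula and the fact that each $d_i$ is a metric; I must also note that $d(x,y)>0$ for $x\in X_1$, $y\in X_2$, which follows because the infimum of $d_1(x,\vphi_1(s))+d_2(\vphi_2(s),y)+\eps$ is at least $\eps>0$. The substantive task is to verify $d(x,z)\le d(x,y)+d(y,z)$ in all cases according to which of the two pieces $X_1,X_2$ the points $x,y,z$ lie in. By the symmetry $1\leftrightarrow 2$ of the construction, it suffices to treat the cases up to swapping the indices, so the genuinely distinct configurations are: (i) all three in the same $X_i$; (ii) $x,z\in X_1$, $y\in X_2$; (iii) $x,y\in X_1$, $z\in X_2$ (equivalently $y,z$ in one piece and $x$ in the other); and (iv) $x\in X_1$, $y\in X_1$ is already covered — so really (i)–(iii) exhaust everything after relabelling.

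\textbf{The easy cases.} In case (i) the inequality is just the triangle inequality for $d_i$ and there is nothing to do. In case (iii), say $x,y\in X_1$ and $z\in X_2$: for any $s\in S$ we have $d_1(y,\vphi_1(s))\le d_1(y,x)+d_1(x,\vphi_1(s))$, hence
\[
 d(y,z) \= \inf_{s\in S}\bigl(d_1(y,\vphi_1(s))+d_2(\vphi_2(s),z)\bigr)+\eps
        \les d_1(x,y) + \inf_{s\in S}\bigl(d_1(x,\vphi_1(s))+d_2(\vphi_2(s),z)\bigr)+\eps
        \= d(x,y)+d(x,z),
\]
which rearranges to $d(x,z)\le d(x,y)+d(y,z)$; the analogous computation handles the mirror configuration where the "middle" point shares the piece of $z$ instead of $x$, and also the case where $x,y$ lie in the same piece and $z$ in the other with $x$ and $y$ interchanged. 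The remaining subcase of (iii)-type, namely all of $x,y,z$ with $x$ alone on one side, reduces to this after using the $d_i$ triangle inequality on the two points that share a side.

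\textbf{The main case.} The crux is case (ii): $x,z\in X_1$ and $y\in X_2$, where I must show $d_1(x,z)\le d(x,y)+d(y,z)$. Given $\delta>0$, pick $s,t\in S$ with $d_1(x,\vphi_1(s))+d_2(\vphi_2(s),y)+\eps \le d(x,y)+\delta$ and $d_2(y,\vphi_2(t))+d_1(\vphi_1(t),z)+\eps \le d(y,z)+\delta$. Then by the triangle inequality in $X_1$ and in $X_2$,
\[
 d_1(x,z) \les d_1(x,\vphi_1(s)) + d_1(\vphi_1(s),\vphi_1(t)) + d_1(\vphi_1(t),z),
\]
and here is where the defining property of $S$ enters: since $s,t\in S$ we have $d_1(\vphi_1(s),\vphi_1(t)) = r_1(s,t) \le r_2(s,t) + 2\eps = d_2(\vphi_2(s),\vphi_2(t)) + 2\eps \le d_2(\vphi_2(s),y)+d_2(y,\vphi_2(t)) + 2\eps$. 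Substituting and regrouping,
\[
 d_1(x,z) \les \bigl(d_1(x,\vphi_1(s))+d_2(\vphi_2(s),y)+\eps\bigr) + \bigl(d_2(y,\vphi_2(t))+d_1(\vphi_1(t),z)+\eps\bigr)
          \les d(x,y)+d(y,z)+2\delta,
\]
and letting $\delta\downarrow 0$ gives the claim. The one thing to watch is that the approximating indices $s,t$ can be chosen in $S$ simultaneously — which is automatic since the two infima are independent — and that the "$+2\eps$" produced by the box-metric estimate is exactly absorbed by the two "$+\eps$" terms built into $d(x,y)$ and $d(y,z)$; this balancing is the whole point of putting the additive $\eps$ into the cross-distances in \eqref{ddef}, and it is the only step requiring any thought.
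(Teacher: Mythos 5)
Your proof is correct and takes essentially the same route as the paper's: the easy case (middle point sharing a piece with an endpoint) folds the triangle inequality for $d_1$ into the infimum defining the cross-distance, and the crucial case (middle point alone in the other piece) uses $|r_1-r_2|\le 2\eps$ on $S^2$ so that the resulting $2\eps$ is exactly absorbed by the two built-in $\eps$'s — the paper manipulates the infima directly where you use $\delta$-approximate minimizers, but this is the same argument. One wording slip worth fixing: in your case (iii), the inequality $d(y,z)\le d(x,y)+d(x,z)$ does not algebraically ``rearrange'' into $d(x,z)\le d(x,y)+d(y,z)$; rather, what you have proved is itself the required triangle-inequality instance for the path $y\to x\to z$, and the instance for $x\to y\to z$ then follows because $x,y\in X_1$ are arbitrary, so their names may simply be swapped.
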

\begin{proof}
	For $x, m\in X_1, \, y \in X_2$, we have
		\[ d(x, y) \les \inf_{s\in S} d_1(x, m) + d_1\(m, \vphi_1(s)\) + d_2\(\vphi_2(s), y\) + \eps
	  		\= d(x, m) + d(m, y). \]
	For $x, y\in X_1,\, m\in X_2$, we have
	\begin{eqnarray*}
		d(x, y) &\le& \inf_{s,t\in S} d_1\(x, \vphi_1(s)\) + d_1\(\vphi_1(s), \vphi_1(t)\)
					+ d_1\(\vphi_1(t), y\)\\
			&\le& \inf_{s,t\in S} d_1\(x, \vphi_1(s)\) + d_2\(\vphi_2(s), \vphi_2(t)\)
					+ d_1\(\vphi_1(t), y\) + 2\eps \\
			&\le& \inf_s d_1\(x, \vphi_1(s)\) + d_2\(\vphi_2(s), m\) + \eps
					+ \inf_t  d_2\(m, \vphi_2(t)\) + d_1\(\vphi_1(t), y\) + \eps \\
			&=& d(x, m)+d(m, y).
	\end{eqnarray*}
	All other cases follow by symmetry or by the \triins\ in $X_1$ and $X_2$.
\end{proof}

\section{Continuity of the coding of $\R$-trees by excursions}

An $\R$-tree (see \cite{Dress:T-theory}) is a complete, connected 0-hyperbolic metric space $(T, d)$.
One of the possible definitions of $0$\nbd hyperbolicity is that it satisfies the four point condition, i.e.
	\[ d(v_1, v_2) + d(v_3, v_4) \les \max\bsset{d(v_1, v_3) + d(v_2, v_4),\, d(v_1, v_4) + d(v_2, v_3)}
	\for v_1,\ldots,v_4\in T. \]
Note that every $0$\nbd hyperbolic space can be embedded isometrically into a unique smallest $\R$\nbd tree (see
\cite[Thm.~3.38]{Evans:StFlour}), which is separable whenever the original space was separable. Because $\dgp$ (unlike
the measured Hausdorff topology) identifies a metric measure space with every subspace containing the support of
the measure, the equivalence class of every $0$\nbd hyperbolic space contains an $\R$\nbd tree. 

One possibility to construct $0$\nbd hyperbolic spaces is to code them by excursions, see \cite{Aldous:CRT3,
LeGall:treeInBex, DuquesneLeGall:GWtoLevy}. To this end, let $h\colon [0,1] \to \R_+$ be a positive function with
$h(0)=0$, and consider the semi-metric
	\[ d_h(s,t) \defeq h(s) + h(t) - 2I_h(s,t),
		\wspace I_h(s,t) \defeq \inf_{u\in [s\land t,\, s\lor t]} h(u), \]
on $[0,1]$. Then the quotient space $T_h \defeq [0,1]\quotient{d_h}$ is a $0$\nbd hyperbolic metric space.
We additionally assume that $h$ is lower semi-continuous. Then $T_h$ is separable and the natural projection
	\[ \pi_h \colon [0,1] \to T_h \]
is measurable. To see this, note that the canonical projection from the graph
$\gr(h)=\bset{(t, h(t))}{t\in[0,1]}\subseteq \R^2$ of $h$ onto the tree $T_h$ is continuous due to lower
semi-continuity of $h$.
$T_h$ needs to be neither complete nor connected, but we identify it with its completion and, once we have put a
measure on it, the equivalence class contains a connected representative.

\begin{enremark}
	\item If the graph of $h$ is connected, then $T_h$ is complete and connected to begin with. We do not,
		however, make this restriction.
	\item If $h$ is continuous, $\pi_h$ is continuous and $T_h$ is compact.
		Conversely, every compact $\R$\nbd tree can be coded by a (non-unique) continuous excursion
		(\cite[Rem.~3.2]{Anita:subtreeprune}). To code compact \emph{measured} trees, continuous
		excursions are not sufficient. 
		See \cite{Duquesne:codingCompact} for a detailed account on coding compact, rooted,
		ordered, measured $\R$\nbd trees in a unique way by upper semi-continuous c\`agl\`ad
		excursions.
\end{enremark}

\begin{definition}
	We define the set of (generalised) \define{excursions} on $[0,1]$ as
		\[ \ex\defeq \bset{h\colon [0,1]\to\R_+}{h(0)=0,\;\text{$h$ lower semi-continuous}}. \]
	Let $\exb$ be the subset of bounded functions in $\ex$.
	For $h\in \ex$, let the mass measure $\mu_h$ on $T_h$ be the image of Lebesgue measure $\lambda$ under
	$\pi_h$ and define the \textbf{coding function}
		\[ \code\colon \ex \to \M, \qquad h \mapstos \T_h \defeq (T_h, d_h, \mu_h).\]
\end{definition}

It is shown in \cite[Prop.~2.9]{AbrahamDelmasHoscheit:exittimes} that the coding function
$\code$ is Lipschitz continuous when the space of excursions is equipped with the uniform metric and the space
of trees with the Gromov-Hausdorff-Prohorov metric. For the Gromov-Prohorov metric, this is a slightly weaker
statement. The proof, however, becomes trivial in this case if we use \thmref{main}, because the trees are
already given in a parameterised form.

\begin{proposition}
	Let\/ $h, g \in \ex$. Then
		\[ \dgp(\T_h, \T_g) \les 2 \norm{h-g} \= 2\sup_{t\in[0,1]} \betrag{h(t)-g(t)}. \]
\end{proposition}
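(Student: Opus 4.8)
The plan is to exploit the fact that both trees $T_h$ and $T_g$ come \emph{already parametrised} by $[0,1]$ via the canonical projections $\pi_h$ and $\pi_g$, so that Gromov's $\dgh$ metric can be estimated directly without having to construct a clever parametrisation. Since $\mu_h = \lambda\circ\pi_h^{-1}$ and $\mu_g = \lambda\circ\pi_g^{-1}$ by definition, the maps $\pi_h\in\F(\T_h)$ and $\pi_g\in\F(\T_g)$ are parametrisations in the sense of the definition of $\dgl$. Their pullbacks of the respective tree metrics are exactly the semi-metrics $d_h$ and $d_g$ on $[0,1]$ (more precisely, $r_h(s,t)=d_h(s,t)$ and $r_g(s,t)=d_g(s,t)$). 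Hence $\dgh(\T_h,\T_g)\le\dboxh(d_h,d_g)$, and it suffices to bound the right-hand side by $4\norm{h-g}$; combining this with $\dgp=\half\dgh$ from \thmref{main} then gives the claimed bound $\dgp(\T_h,\T_g)\le 2\norm{h-g}$.

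The next step is the pointwise estimate on the semi-metrics. Writing $\delta\defeq\norm{h-g}$, I would show $\betrag{d_h(s,t)-d_g(s,t)}\le 4\delta$ for \emph{all} $s,t\in[0,1]$ — there is no need to discard an exceptional set of small measure here, so in fact $\dboxh(d_h,d_g)\le 4\delta$ follows with $X_\eps=[0,1]$ and $\eps=4\delta$. For the pointwise bound, recall $d_h(s,t)=h(s)+h(t)-2I_h(s,t)$ where $I_h(s,t)=\inf_{u\in[s\land t,s\lor t]}h(u)$. From $\norm{h-g}\le\delta$ we immediately get $\betrag{h(s)-g(s)}\le\delta$, $\betrag{h(t)-g(t)}\le\delta$, and since an infimum is $1$-Lipschitz in the sup-norm of its argument, $\betrag{I_h(s,t)-I_g(s,t)}\le\delta$ as well. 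Adding these with the appropriate coefficients gives $\betrag{d_h(s,t)-d_g(s,t)}\le\delta+\delta+2\delta=4\delta$.

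Putting the pieces together: $\dgp(\T_h,\T_g)=\half\dgh(\T_h,\T_g)\le\half\dboxh(d_h,d_g)\le\half\cdot 4\norm{h-g}=2\norm{h-g}$, as desired. I do not expect a genuine obstacle in this argument; the only point requiring a word of care is the verification that $\pi_h$ really is a \emph{measurable} measure-preserving map $[0,1]\to T_h$ so that it qualifies as an element of $\F(\T_h)$ — but this is already recorded in the text preceding the proposition (measurability of $\pi_h$ follows from lower semi-continuity of $h$ via continuity of the projection from $\gr(h)$, and the measure-preserving property is the definition of $\mu_h$). One should also note that $d_h$ is only a semi-metric on $[0,1]$ rather than a metric, but this is irrelevant: the definition of $\dboxl$ is stated for arbitrary functions $r,s\colon X\times X\to\R$, and the passage to the quotient $T_h=[0,1]\quotient{d_h}$ is exactly what is being parametrised, so $r_h=d_h$ is the correct pullback.
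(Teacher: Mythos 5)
Your proposal is correct and is essentially the paper's own (one-line) proof, spelled out: both use the canonical projections $\pi_h,\pi_g$ as ready-made parametrisations, bound $\dgh(\T_h,\T_g)$ by $\dboxh(d_h,d_g)$ via the pointwise estimate $\betrag{d_h-d_g}\le 4\norm{h-g}$ (no exceptional set needed), and conclude with \thmref{main}. The details you add — the $1$-Lipschitz dependence of the infimum on the function and the measurability of $\pi_h$ — are exactly the right points of care.
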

\begin{proof}
	$\ds \dgp(\T_h, \T_g) \= \half \dgh(\T_h, \T_g) \les \half \Box_\half(d_h, d_g) \les 2\norm{h-g}.$
\end{proof}

The uniform metric on $\ex$ is a rather strong one, in particular $\ex$ and $\exb$ are not separable in this
metric. The coding function turns out to be still continuous if we equip $\ex$ with a weaker, separable,
metrisable topology, namely the weakest topology which is stronger than convergence in measure and epigraph
convergence.  For $h,h'\in\ex$, let
	\[ \dm(h, h') \defeq \inf\Bset{\eps>0}{\lambda\(\bset{t}{|h(t)-h'(t)|>\eps}\) < \eps}, \]
which metrises convergence in Lebesgue measure, $d_H$ the Hausdorff metric in $\R^2$, and
	\[ \dhepi(h, h') \defeq d_H\(\epi(h), \epi(h')\), \qquad \epi(h) \defeq \bset{(t,y)\in [0,1]\times \R_+}{y\ge h(t)}.\]
Note that the epigraph of a function is closed if and only if the function is lower semi-continuous.
Epigraph convergence is usually defined as convergence in Fell topology (or equivalently Kuratowski convergence)
of the epigraphs, see e.g.\ \cite{Beer:topOnSets}. It is a compact, metrisable topology on the set $\excl$ of
$(\R_+\cup\{\infty\})$\nbd valued, lower semi-continuous functions on $[0,1]$. On $\excl$, the topology induced
by $\dhepi$ is strictly stronger. Restricted to $\ex$, however, the topologies coincide, which follows from
\cite[Thm.~1]{Beer:epiconv} using compactness of $[0,1]$ and $\R$-valuedness of excursions.
Epigraph convergence also coincides with $\Gamma$\nbd convergence (see e.g.\ \cite{DalMaso:gammaconv}),
whence the name $\dhepi$.

\begin{definition}
	We endow\/ $\ex$ with the \textbf{excursion metric} $ \dex \defeq \dhepi + \dm $.
\end{definition}

Recall that a metrisable topological space $X$ is called \emph{Lusin space} if it is the continuous, injective
image of a Polish space, i.e.\ if there exists a Polish space $Y$ and a continuous bijection $f\colon Y \to X$.
$X$ is Lusin if and only if it is homeomorphic to a Borel subset of a Polish space (see \cite[Sec.~8.6]{Cohn80}
for details).

\begin{proposition}
	$\ex$ is a separable metric space, and the set of continuous excursions is dense.
	Furthermore, $\exb$ is a Lusin space.
\end{proposition}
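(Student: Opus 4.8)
The plan is to establish the three assertions — metrizability, separability (with density of continuous excursions), and the Lusin property of $\exb$ — more or less in that order, building each on the previous one. Metrizability is essentially free: $\dhepi$ is a genuine metric on $\ex$ because epigraphs of elements of $\ex$ are closed (by lower semi-continuity, as noted), so the Hausdorff distance of the epigraphs vanishes only when the epigraphs coincide, i.e.\ when the functions agree; and $\dm$ is a pseudometric that vanishes exactly when $h=h'$ Lebesgue-a.e. Hence $\dex=\dhepi+\dm$ separates points and the only thing to observe is that it is a metric (triangle inequality is inherited from the two summands). So the first sentence of the proposition needs only a line.

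For separability and density of continuous excursions, the idea is to exhibit an explicit countable dense set and, simultaneously, to approximate an arbitrary $h\in\ex$ by continuous excursions. First I would reduce to bounded excursions: for $h\in\ex$ the truncations $h\wedge n$ converge to $h$ both in $\dm$ (they agree with $h$ on $\{h\le n\}$, whose complement has Lebesgue measure tending to $0$) and in $\dhepi$ (the epigraphs $\epi(h\wedge n)=\epi(h)\cup([0,1]\times[0,n])$ increase to $\epi(h)$, and Hausdorff convergence to the closure follows from compactness of $[0,1]$ and boundedness on bounded pieces — this is exactly where $\R$-valuedness matters). Then, for a bounded lower semi-continuous $h$, the natural device is the inf-convolution (Moreau–Yosida) regularization $h_n(t)\defeq\inf_{u\in[0,1]}\bigl(h(u)+n\,|t-u|\bigr)$, which is $n$-Lipschitz, satisfies $h_n\uparrow h$ pointwise as $n\to\infty$ since $h$ is l.s.c., has $h_n(0)\to 0$ (and one can subtract $h_n(0)$ or otherwise correct the value at $0$ to stay in $\ex$ without spoiling anything), and whose epigraphs decrease to $\epi(h)$, giving $\dhepi(h_n,h)\to 0$; convergence $\dm(h_n,h)\to0$ follows from pointwise monotone convergence and dominated convergence (boundedness again). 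So continuous — indeed Lipschitz — excursions are $\dex$-dense in $\ex$. Separability is then obtained by a further approximation within the Lipschitz excursions: piecewise-linear excursions with rational breakpoints and rational values form a countable set, and one checks that such a function with $L^\infty$-distance $<\delta$ to a given Lipschitz $h$ (with small enough value at $0$) is also $\dhepi$- and $\dm$-close (uniform closeness controls both, since $|h-h'|\le\delta$ everywhere forces $\dm\le\delta$ and $\dhepi\le\delta$), so this countable family is $\dex$-dense.

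For the Lusin property of $\exb$, recall that a Lusin space is the continuous bijective image of a Polish space, equivalently (for metrizable spaces) a Borel subset of a Polish — more precisely, it suffices to exhibit $\exb$ as a Borel subset of a Polish space, or to give a finer Polish topology with the same Borel sets. The natural ambient space is the space $\mathcal K$ of nonempty compact (equivalently, closed) subsets of the compact space $[0,1]\times[0,M]$ for each $M$, with the Hausdorff metric — this is Polish (indeed compact). The map $h\mapsto\epi(h)$ identifies $\ex$ with the set of closed subsets $K$ that are epigraphs of functions, i.e.\ $K=\epi(h)$ for some l.s.c.\ $h$; within $\mathcal K([0,1]\times[0,M])$ the condition "$K$ is the epigraph of a function bounded by $M$ with value $0$ at $0$" is a Borel (in fact, one can check it is $G_\delta$ or a countable intersection of closed/open conditions) subset — it says $K$ is upward-closed in the vertical direction, its complement's vertical sections are open intervals, etc. Thus $\exb$ with the $\dhepi$-topology embeds as a Borel subset of a compact metric space, hence is Lusin for that topology; and since $\dex=\dhepi+\dm$ is a finer metrizable (hence still with a Polish refinement: the countable dense set plus the Borel structure is preserved) — more carefully, I would argue that $\dex$ and $\dhepi$ generate the same Borel $\sigma$-algebra on $\exb$ because $\dm$ is itself Borel-measurable with respect to $\dhepi$ (convergence in epigraph of uniformly bounded excursions implies convergence in measure along subsequences, so the identity map is Borel), whence $\exb$ is Lusin also in the $\dex$-topology.

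The main obstacle I anticipate is the Lusin claim, specifically verifying that the $\dex$-Borel structure coincides with the $\dhepi$-Borel structure (so that Borel-ness in the compact hyperspace transfers), and pinning down that the set of epigraphs of bounded l.s.c.\ excursions is genuinely Borel in the hyperspace. The regularization arguments for density and separability are standard once one commits to inf-convolution and truncation; the delicate points there are only the bookkeeping at $t=0$ and the fact that monotone pointwise convergence of l.s.c.\ functions on a compact interval does upgrade to Hausdorff convergence of epigraphs — which is where compactness of $[0,1]$ and finiteness of the values are used, exactly as the surrounding text already flags.
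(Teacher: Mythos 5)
Your treatment of metrizability and of the density/separability claims is fine and runs along the same lines as the paper (monotone approximation of $h$ by continuous minorants, which converges simultaneously in $\dhepi$ and in $\dm$; the inf-convolution is a clean way to produce these, and note that $h_n(0)=\inf_u(h(u)+n|u|)=0$ exactly, so no correction at $t=0$ is needed). The embedding of $(\exb,\dhepi)$ via epigraphs into the compact hyperspace and the reduction of the Lusin property of $(\exb,\dex)$ to Borel measurability of the $\dm$-structure with respect to $\dhepi$ (via Lusin--Souslin) also matches the paper's strategy, which realises $(\exb,\dex)$ as the graph of the map $\pi\colon(\exb,\dhepi)\to L^0$.

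The genuine gap is your justification of that measurability. You assert that ``convergence in epigraph of uniformly bounded excursions implies convergence in measure along subsequences, so the identity map is Borel.'' This is false, and the paper's own example refutes it: $h_n(t)=1-\unit_{\N_0}(nt)$ satisfies $\dhepi(h_n,0)\to 0$ while $h_n=\unit$ Lebesgue-a.e., so $h_n\to\unit\neq 0$ in measure (and the same holds along every subsequence). So the map $h\mapsto[h]_{\lambda\text{-a.e.}}$ from $(\exb,\dhepi)$ to $L^0$ is not sequentially continuous, and even if it were, continuity along sequences is not what you need — you need actual Borel measurability, which requires an argument. The paper supplies one: fixing a uniformly dense sequence $\folge{f}$ of continuous excursions, the maps $\pi_n(h)=\sup\set{f_k}{f_k\le h,\,k\le n}$ are simple and measurable because $\set{h}{h\ge f_k}$ is $\dhepi$-closed (it is the set of $h$ with $\epi(h)\subseteq\epi(f_k)$, which is stable under Hausdorff limits), and $\pi_n\to\pi$ pointwise in $L^0$ since $h=\sup_{f_k\le h}f_k$ for l.s.c.\ $h$. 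You need to replace your continuity claim by something of this kind; without it the Lusin assertion is not established. (Two minor slips elsewhere: $\epi(h\wedge n)=\epi(h)\cup([0,1]\times[n,\infty))$, not $[0,n]$, and these sets decrease, not increase, to $\epi(h)$; this does not affect the argument.)
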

\begin{proof}
	$\dex$ is obviously a metric, and the continuous excursions are both $\dhepi$\nbd dense (increasing
	pointwise convergence implies $\dhepi$\nbd convergence) and $\dm$\nbd dense in $\ex$. Hence $\ex$ is
	separable, and it remains to show that $\exb$ is a Borel subset of a Polish space.
	First note that this is the case for $(\exb, \dhepi)$, because the set of excursions bounded by a fixed
	$M\in \N$ is closed in the compact metric space $\excl$ with epigraph topology.
	Now we can identify $(\exb, \dex)$ with the graph of the function $\pi\colon (\exb, \dhepi)
	\to L^0\defeq \(L^0(\lambda), \dm\)$, which maps an excursion to its $\lambda$-a.e.\ equivalence class.
	It is enough to show that $\pi$ is measurable, because then $(\exb, \dex) \simeqq \gr(\pi)$ is an
	injective measurable image of a Lusin space, hence Lusin itself by \cite[Thm.~8.3.7]{Cohn80}.

	To show measurability, choose a fixed dense sequence $\folge{f}$ of continuous excursions, and define
	$\pi_n\colon \exb \to L^0,\; h \mapsto \sup_{f_k\le h,\,k\le n} f_k$. 
	Then $\pi_n$ is a simple function and measurable, because $\set{h\in \exb}{h\ge f_k}$ is closed in
	$(\exb, \dhepi)$. Because $h=\sup_{f_n \le h} f_n$, $\pi$ is the pointwise limit of the $\pi_n$, thus
	also measurable.
\end{proof}

\begin{example}[$\dex$ is not complete and $\code$ is not uniformly continuous]
	Let $h_n(t) = 1-\unit_{\N_0}(nt)$, $t\in[0,1]$.
	Then $h_n$ codes the discrete space of $n$ points with uniform distribution or, equivalently, the
	star-shaped tree with $n$ leaves and uniform distribution on the leaves. $h_n$ converges in epigraph
	topology to the zero function, while $\dm(h_n, \unit)=0$ for each $n$.
	Thus $\folge{h}$ is Cauchy w.r.t.\ $\dex$, but does not converge. $\(\code(h_n)\)_{n\iN}$
	is not a Cauchy sequence in $\M$, hence $\code$ is not uniformly continuous.
\end{example}

\begin{remark}
	We do not know if $\ex$ is Lusin or even Polish.
	$\exb$ is not Polish, because it is a dense \Fsset\ (countable union of closed sets) with dense
	complement (in $\ex$).

	That such a set cannot be Polish can be seen as follows. Let $A_n$ be closed with dense complement in
	$\ex$. Then its closure $\closure{A}_n$ in $\excl$ is closed with empty interior in the Polish space $\excl$.
	Assume that $A:=\bigcup_{n\iN} A_n$ is Polish. By the Mazurkiewicz theorem (\cite[Thm.~8.1.4]{Cohn80}),
	$A$ is a \Gdset\ in $\excl$, i.e.\ $A=\bigcap_{n\iN} U_n$ for some open sets $U_n\subseteq \excl$. Let
	$A'_n := \excl \setminus U_n$.
	Then $\excl=\bigcup_{n\iN} \(\closure{A}_n \cup A'_n\)$ and by the Baire category theorem
	(\cite[Thm.~D.37]{Cohn80}), at least one $A'_n$ has to have non-empty interior.
	This means that $A$ is not dense.
\end{remark}

\begin{theorem}
	The coding function\/ $\code\colon \ex\to \M$ is continuous (w.r.t.\ $\dex$ and\/ $\dgp$).
\end{theorem}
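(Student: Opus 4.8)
The plan is to prove sequential continuity: given $h_n\to h$ in $\dex$, show $\dgp(\T_{h_n},\T_h)\to0$. The trees carry the canonical parametrisations $\pi_{h_n}\colon[0,1]\to T_{h_n}$ and $\pi_h\colon[0,1]\to T_h$, which push Lebesgue measure $\lambda$ to the mass measures and whose pullbacks of the tree metrics are precisely the pseudo-metrics $d_{h_n},d_h$ on $[0,1]$ (recall $d_h(s,t)=h(s)+h(t)-2I_h(s,t)$ with $I_h(s,t)=\inf_{[s\wedge t,s\vee t]}h$). Hence, by \thmref{main} and the definition of $\dgh$ as an infimum over parametrisations,
\[
	\dgp(\T_{h_n},\T_h)\=\half\dgh(\T_{h_n},\T_h)\les\half\,\dboxh(d_{h_n},d_h),
\]
so it suffices to fix $\eps>0$ and produce, for all large $n$, a Borel set $S_n\subseteq[0,1]$ with $\lambda([0,1]\setminus S_n)\le\eps/2$ and $\betrag{d_{h_n}-d_h}\le\eps$ on $S_n\times S_n$.

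Three ingredients go into building $S_n$. First, with $\epsilon_n\defeq\sup_{m\ge n}\dhepi(h_m,h)\downarrow0$, the bound $d_H(\epi(h_n),\epi(h))\le\epsilon_n$ immediately yields, for every subinterval $J\subseteq[0,1]$, the sandwich $\inf_{J^{+\epsilon_n}}h-\epsilon_n\les\inf_J h_n\les\inf_{J^{-\epsilon_n}}h+\epsilon_n$, where $J^{+\epsilon_n}$ (resp.\ $J^{-\epsilon_n}$) is $J$ enlarged (resp.\ shrunk) by $\epsilon_n$, the right inequality holding when $J^{-\epsilon_n}\ne\emptyset$. Second, with $\delta\defeq\eps/12$, convergence in measure gives a Borel set $E_n$ with $\lambda(E_n)\to0$ and $\betrag{h_n-h}\le\delta$ off $E_n$. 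Third — and crucially — set $A_n\defeq\{r\in[0,1]\colon h(r)-\inf_{[r-3\epsilon_n,\,r+3\epsilon_n]\cap[0,1]}h>\delta\}$; since $[r-3\epsilon_n,r+3\epsilon_n]$ shrinks to $\{r\}$ and $h$ is lower semi-continuous, $\inf_{[r-3\epsilon_n,r+3\epsilon_n]\cap[0,1]}h\uparrow h(r)$ pointwise, so the $A_n$ decrease to $\emptyset$ and $\lambda(A_n)\to0$ by continuity of $\lambda$ from above. Then $S_n\defeq[0,1]\setminus(E_n\cup A_n\cup(A_n-\epsilon_n)\cup(A_n+\epsilon_n))$ has $\lambda([0,1]\setminus S_n)\le\eps/2$ for large $n$, as translation preserves $\lambda$.

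To finish one checks $\betrag{d_{h_n}(s,t)-d_h(s,t)}\le\eps$ for $s,t\in S_n$, $s\le t$, $J\defeq[s,t]$. Since $s,t\notin E_n$ the terms $\betrag{h_n(s)-h(s)}$ and $\betrag{h_n(t)-h(t)}$ are $\le\delta$, so only $\betrag{\inf_J h_n-\inf_J h}$ must be controlled. If $t-s\le2\epsilon_n$, then $J$ and $J^{+\epsilon_n}$ lie in $[s-3\epsilon_n,s+3\epsilon_n]$, and $s\notin A_n$ together with the lower half of the sandwich forces both $\inf_J h$ and $\inf_J h_n$ to lie within $\delta+\epsilon_n$ of $h(s)$, hence within $2(\delta+\epsilon_n)$ of each other. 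If $t-s>2\epsilon_n$, the sandwich gives $\betrag{\inf_J h_n-\inf_J h}\les(\inf_{J^{-\epsilon_n}}h-\inf_{J^{+\epsilon_n}}h)+2\epsilon_n$, and decomposing $\inf_{J^{+\epsilon_n}}h=\min(\inf_{[s-\epsilon_n,s+\epsilon_n]}h,\ \inf_{J^{-\epsilon_n}}h,\ \inf_{[t-\epsilon_n,t+\epsilon_n]}h)$ and using $\inf_{J^{-\epsilon_n}}h\le h(s+\epsilon_n)\wedge h(t-\epsilon_n)$, one sees that a value of $\inf_{J^{-\epsilon_n}}h-\inf_{J^{+\epsilon_n}}h$ exceeding $\delta$ would give $h(s+\epsilon_n)-\inf_{[s-\epsilon_n,s+\epsilon_n]}h>\delta$ or the same at $t-\epsilon_n$, i.e.\ $s+\epsilon_n\in A_n$ or $t-\epsilon_n\in A_n$ — both ruled out by $s,t\in S_n$. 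Choosing $n$ large enough that $\epsilon_n$ is small relative to $\eps$, all pieces sum to at most $\eps$.

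The main obstacle is the interior infimum $I_h$: in contrast to the uniform-metric proposition, epigraph convergence controls $\inf_J h_n$ only up to an $\epsilon_n$-perturbation of $J$, and on short intervals ($\abs J\le2\epsilon_n$) it gives no upper bound for $\inf_J h_n$ at all. Both difficulties are absorbed by the ``downward-oscillation'' sets $A_n$, whose measures vanish by lower semi-continuity. The point needing care is that the bad conditions on the endpoints localise to $s$ and $t$ individually (indeed to the single points $s+\epsilon_n$, $t-\epsilon_n$), so that the set to be deleted is a finite union of \emph{translates} — not neighbourhoods — of the $A_n$; this is exactly what makes it possible to obtain a product set $S_n\times S_n$, as the box-metric requires, rather than merely a subset of $[0,1]^2$ of small measure.
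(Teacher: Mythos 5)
Your proof is correct and follows essentially the same strategy as the paper: reduce to the box metric via the canonical parametrisations $\pi_h$, delete the set where $h$ and $h_n$ differ in value together with the set of points where $h$ has a downward oscillation on a small interval (which shrinks to $\emptyset$ by lower semi-continuity), and use the epigraph Hausdorff bound to compare infima over slightly shifted intervals, absorbing the shift via the no-oscillation condition at the endpoints. The paper's execution is a bit leaner — it deletes only the oscillation set itself (no translates) and argues directly with the minimiser $u$ of $h$ on $[s,t]$ rather than with sandwich inequalities and the three-piece decomposition of $\inf_{J^{+\epsilon_n}}h$ — but the two arguments are the same in substance.
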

\begin{proof}
	Fix $h\in\ex$, $\eps>0$. We construct a $\delta>0$ such that
	$\Box_{\mspace{-1mu}1}(d_h, d_g) \les 6\eps$ for every $g\in\ex$ with $\dex(h, g)\les \delta$. Then
	\corref{equiv} implies the result.
\begin{enumerate}
	\item Let $A_\eta \defeq \bset{t\in [0,1]}{I_h(t-\eta, t+\eta) < h(t) - \eps}$. Because $h$ is lower
		semi-continuous, $A_\eta \searrow \emptyset$ for $\eta\to 0$. Thus there is a $0<\delta<\eps$ with
		$\lambda(A_\delta) < \eps$. Fix $g\in\ex$ with $\dex(h, g)\les\delta$ and let $X_\eps \defeq
		[0,1] \setminus \(A_\delta \cup \sset{|h-g|>\delta}\)$. Then $\lambda\([0,1]\setminus X_\eps\)
		\les 2\eps$ and it is enough to show $\betrag{d_h(s, t) - d_g(s,t)} \les 6\eps$ for $s,t\in X_\eps$.
		Because $h$ and $g$ are $\eps$-close at $s$ and $t$, this is satisfied once we have shown
		$\betrag{I_h(s,t) - I_g(s, t)} \les 2\eps$.
	\item ``$I_g\le I_h+2\eps$'':
		Because $h$ is lower semi-continuous, the infimum $I_h(s,t)$ is attained and there is a
		$u\in [s,t]$ with $h(u) = I_h(s,t)$. From $\dhepi(h,g)\les \delta$, we obtain the existence of
		$u'\in [u-\delta, u+\delta]$ with $g(u') \les h(u) + \delta$. If $u' \in [s,t]$, then
		$I_g(s,t)\le g(u') \le h(u)+\delta \le I_h(s,t)+\eps$. For the case $u'\not\in[s,t]$,
		assume w.l.o.g.\ $u'<s$, and therefore $u\in [s, s+\delta]$. Then, because $s$ is not in
		$A_\delta$, we have
		$I_h(s,t) \= h(u) \ge {h(s) - \eps} \ge {g(s) - 2\eps} \ge I_g(s,t) - 2\eps$.
	\item ``$I_h\le I_g+2\eps$'':
		Choose $u\in [s,t]$ with $g(u) = I_g(s,t)$ and $u'\in [u-\delta,u+\delta]$ with
		$h(u') \le {g(u) + \delta}$.
		As above we can assume $u \in [s, s+\delta]$, $u' \in [s-\delta, s]$ and obtain
		$I_h(s,t) \le h(s) \le {h(u') + \eps} \le g(u) + 2\eps = I_g(s, t) + 2\eps$.
\qedhere\end{enumerate}
\end{proof}

\begin{acknowledgements}
	I am thankful to Anita Winter for discussions, encouragement, and helpful comments on the previous
	version of the manuscript. I also thank Guillaume Voisin for many discussions about trees, Patrick
	Hoscheit for a discussion about topologies on the space of excursions, and the referees for
	helpful comments.
\end{acknowledgements}

\smaller
\bibliography{bib-math,mm-space}

\end{document}